\numberwithin{equation}{section}
\numberwithin{figure}{section}
\newtheorem{theorem}{Theorem}[section]
\newtheorem*{theorem*}{Theorem}          
\newtheorem*{prop*}{Proposition}  
\newtheorem{coro}{Corollary}[section]
\newtheorem{lemma}[theorem]{Lemma}
\theoremstyle{definition}
\theoremstyle{remark}
\def\St{\ensuremath{\mathcal{S}}}
\newcommand{\pa}[1]{\left(#1\right)}
\newcommand{\cpa}[1]{\left\{#1\right\}}
\newcommand{\tn}[1]{\textnormal{#1}}
\newcommand{\card}[1]{\left| #1 \right|}
\newcommand{\mcg}[1]{\tn{MCG}\pa{#1}}
\newcommand{\Int}[1]{\tn{Int} \, #1}
\begin{document}

\title{Prime theta-curves on minimal genus Seifert surfaces}

\author{Jack S. Calcut and Jamie Phillips-Freedman}

\makeatletter
\@namedef{subjclassname@2020}{%
  \textup{2020} Mathematics Subject Classification}
\makeatother

\keywords{Theta-curve, prime, Seifert surface, minimal genus.}
\subjclass[2020]{57K10; Secondary 57M15.}
\date{\today}

\begin{abstract}
     We prove that each prime knot union an essential arc on a minimal genus Seifert surface is a prime theta-curve.
\end{abstract}
\maketitle
\section{Introduction}
The classification of prime knots is a fundamental challenge in knot theory. Knotted graph theory is the study of embeddings of graphs in 3-manifolds.
\begin{figure}[htb!]
    \centering
    \includegraphics{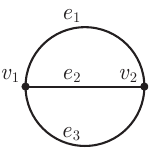}
    \caption{The theta-graph.}
    \label{fig:theta-graph}
\end{figure}
A \textbf{theta-curve} is an embedding of the \textbf{theta-graph}---see Figure \ref{fig:theta-graph}---in the 3-sphere. Theta-curves arise naturally in various contexts and have been well-studied---see Baker, Buck, Moore, O'Donnol and Taylor~\cite{Baker}, the first author and Nieman~\cite{calcut2025}, and references therein.  

Theta-curves have two natural connected sum operations: add a knot to an edge of a theta-curve and combine two theta-curves at a vertex. Those two operations yield the notion of a prime theta-curve. The classification of prime theta-curves is a fundamental problem in knotted graph theory. Prime knots have been tabulated up to 20 crossings by Thistlethwaite~\cite{thistlethwaite} and work of others. The current tabulation of theta-curves is more modest at seven crossings---see Moriuchi~\cite{Moriuchi}. A natural approach to construct a prime theta-curve is to add an arc to a prime knot. That was studied for knots and arcs on a torus by the first author and Nieman~\cite{calcut2025}. Here, we use arcs on minimal genus Seifert surfaces. The main purpose of the present paper is to prove the following. 

\begin{theorem}\label{thm:main} Let $K\subset S^3$ be a prime knot and $\Sigma\subset S^3$ a minimal genus Seifert surface for $K$. Then, $K$ union any arc $\alpha$ neatly embedded and essential in $\Sigma$ is a prime theta-curve.   
\end{theorem}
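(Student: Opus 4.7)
Recall that a theta-curve is \emph{prime} if it is non-trivial and admits no non-trivial connect-sum decomposition of either type: (i) \emph{edge connect-sum} via a $2$-sphere $S$ meeting $\theta$ transversely in two points on a single edge, non-trivial iff the side of $S$ containing only an arc of that edge is knotted, and (ii) \emph{vertex connect-sum} via a $2$-sphere $S$ meeting $\theta$ transversely in three points, one per edge, non-trivial iff neither resulting tripod is unknotted in its ball. Non-triviality of $\theta := K \cup \alpha$ is immediate since $K$ is a constituent knot of $\theta$ and $K$ is non-trivial.

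Suppose some sphere $S$ realizes a non-trivial decomposition. Let $B_1, B_2$ denote the balls bounded by $S$, set $\Sigma_i := \Sigma \cap \overline{B_i}$ after making $S$ transverse to $\Sigma$, and write $g_i$ for the genera. The intersection $\Sigma \cap S$ is a compact $1$-manifold in $\Sigma$ whose endpoints lie on $K \cap S$; thus it consists of a single arc $\gamma$ (from $q_1$ to $q_2$ where $\{q_1,q_2\} = K \cap S$) together with $n$ circles, in the edge-on-$K$-edge and vertex cases, and of $n$ circles alone in the edge-on-$\alpha$ case. An Euler-characteristic computation yields $g(\Sigma) = g_1 + g_2 + n$ in the first two cases and $g(\Sigma) = g_1 + g_2 + n - 1$ in the third.

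The central tool is a \emph{genus-replacement argument} constrained by $\Sigma$ having minimal genus. When $S$ meets $K$, primality of $K$ forces $K = K_1 \# K_2$ with one summand, say $K_2 := (K \cap \overline{B_2}) \cup \gamma$, the unknot; a standard innermost-circle argument on a spanning disk in $S^3$ produces a disk $D_2 \subset \overline{B_2}$ with $\partial D_2 = K_2$. Replace $\Sigma_2$ by $D_2$ together with $n$ disks in $\overline{B_2}$ capping the remaining circles of $\Sigma \cap S$; chosen pairwise disjoint by inductively pushing innermost sub-disks of $S$ into $\overline{B_2}$, these glue to $\Sigma_1$ along $\gamma$ and the $n$ circles into an embedded Seifert surface $\Sigma'$ for $K$ of genus $g_1$. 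Minimality forces $g_2 = 0$ and $n = 0$, so $\Sigma \cap S = \gamma$ alone and $\Sigma_2$ is itself a disk bounded by $K_2$. When $S$ misses $K$, replace $\Sigma_1$ by $n$ capping disks in $\overline{B_1}$ to obtain a Seifert surface of genus $g_2$; minimality forces $g_1 = 0$ and (since $\alpha \cap S \neq \emptyset$ and $\alpha \subset \Sigma$ force $\Sigma \cap S \neq \emptyset$) $n = 1$, so $\Sigma_1$ is a disk with a single boundary circle on $S$.

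The three cases now conclude. In the edge-on-a-$K$-edge case, $\alpha \subset \Sigma_2$ is a chord of the disk $\Sigma_2$ joining the vertices $v_1, v_2$ in $K \cap \partial \Sigma_2$; the sub-disk of $\Sigma_2$ on the $K$-only side of $\alpha$ is a disk in $\Sigma$ cobounded by $\alpha$ and the edge $e_2 \subset K$, making $\alpha$ boundary-parallel in $\Sigma$ and contradicting its essentiality. In the edge-on-$\alpha$ case, the middle sub-arc $\alpha_0 \subset \alpha$ lying in $B_1$ is a chord of the disk $\Sigma_1$, so $\alpha_0$ is isotopic rel endpoints within $B_1$ to an arc on $\partial B_1 = S$; thus $\alpha_0$ is unknotted in $B_1$, contradicting non-triviality of the edge-sum. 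In the vertex case, the tripod $T_2 \subset \overline{B_2}$ (with half-edges from the vertex $v_2$ to $q_1, q_2$ along $K$ and to the single point $r \in \alpha \cap \gamma$ along $\alpha$) lies entirely inside the disk $\Sigma_2$; doubling $\Sigma_2$ across $S$ along $\gamma$ produces an embedded disk in $S^3$ containing the doubled theta-graph $\theta_2$, so $\theta_2$ is the trivial theta-curve, contradicting non-triviality of the vertex-sum. The principal technical obstacle is the construction of the disjoint disk system used in the replacement step, requiring a careful innermost induction on sub-disks of $S$; a secondary subtlety is verifying that the unknot $K_2$ bounds a disk in $\overline{B_2}$ (not merely in $S^3$), handled by the standard disk-surgery argument.
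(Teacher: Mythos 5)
Your overall strategy is the paper's: intersect the splitting sphere $S$ with $\Sigma$, use minimality of genus to show that the side of $\Sigma$ meeting the unknotted ball-arc pair is a disk, and then derive a contradiction (either $\alpha$ becomes inessential, or the relevant ball-arc/ball-prong pair is unknotted so the decomposition is trivial). Your endgames in the three cases are all essentially the paper's. The genuine gap is in the step where you reduce $\Sigma\cap S$ to a single arc (resp.\ a single circle). Your formula $g(\Sigma)=g_1+g_2+n$ and the deduction ``minimality forces $g_2=0$ and $n=0$'' implicitly assume that $\Sigma_1$ and $\Sigma_2$ are connected, which need not hold. For instance, push a small disk of $\Sigma$ across $S$: this adds an inessential circle $C$ to $\Sigma\cap S$ and a disk component to one of the $\Sigma_i$. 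An Euler characteristic check shows your genus formula then fails, and your replacement surface $\Sigma'$ is disconnected---its component containing $K$ has the same genus as $\Sigma$, so minimality is not violated even though $n\geq 1$. The same problem defeats the claim that $n=1$ in the edge-on-$\alpha$ case, where $\Sigma\cap S\neq\emptyset$ only gives $n\geq1$. Consequently you cannot conclude that $\Sigma_2$ (resp.\ $\Sigma_1$) is a single disk, and all three of your concluding arguments require exactly that.

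The missing ingredient is a procedure to eliminate the circle components of $\Sigma\cap S$ \emph{before} running the genus-replacement argument, and this is where the paper does real work. A circle $C$ of $\Sigma\cap S$ that is innermost in $S$ is either essential in $\Sigma$---in which case compressing $\Sigma$ along the innermost disk of $S$ yields a Seifert surface for $K$ of strictly smaller genus (this needs a lemma controlling how genus changes under compression along essential separating and non-separating curves), contradicting minimality---or inessential in $\Sigma$, in which case the disk it bounds in $\Sigma$ together with the innermost disk of $S$ form a sphere disjoint from $K$, and one isotopes $S$ across the complementary ball not containing $K$ to remove $C$. Only after all circles are gone is $\Sigma\cap S$ a single arc (or single circle), are $\Sigma_1$ and $\Sigma_2$ connected, and does your replacement argument go through. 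Note that your innermost-disk inductions are aimed at different targets (disjointness of the capping disks and the construction of $D_2$), not at removing the circles of $\Sigma\cap S$, so they do not fill this gap.
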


Theorem \ref{thm:main} yields a broad collection of prime theta-curves: prime knots abound,  every knot has a minimal genus Seifert surface, and each Seifert surface $\Sigma$ of positive genus contains a neatly embedded essential arc. For example, any non-separating such arc is essential as in Figure \ref{fig:thm_example}. 
  \begin{figure}[h]
         \centering
         \includegraphics{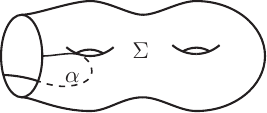}
         \caption{A neatly embedded nonseparating arc $\alpha$.}
         \label{fig:thm_example}
     \end{figure}    
Essential arcs on Seifert surfaces are discussed further in Section \ref{sec:applications}.
Theorem~\ref{thm:main} may be viewed as a necessary condition for a Seifert surface of a given knot to be of minimal genus.
That raises the question of whether the converse holds---see the end of Section~\ref{sec:applications}.

This paper is organized as follows. Section~\ref{sec:def} gives definitions and reviews properties of the genera of surfaces. Section~\ref{sec:proof} proves Theorem~\ref{thm:main}. Section~\ref{sec:applications} explores the infinite collection of prime theta-curves provided by Theorem~\ref{thm:main} and poses a few questions.

\section{Definitions and Background}\label{sec:def}

We work in the piecewise linear category. All results hold in the topological (locally flat) and differentiable categories since those three categories are equivalent in our relevant dimensions $\leq3$.
Fix an orientation of the 3-sphere $S^3$.
Knots and graphs are unlabeled and unoriented.
Two knots or graphs in $S^3$ are \textbf{equivalent} provided an isotopy of $S^3$
carries one to the other set-wise.
We adopt the usual convention that the unknot is not prime. A theta-curve is \textbf{trivial} or \textbf{unknotted} provided it lies on a 2-sphere embedded in $S^3$. A \textbf{$k$-prong} is formed by gluing together one boundary point from each of $k\geq2$ arcs. A \textbf{ball-prong pair} is a pair $(B,P)$ where $P$ is a $k$-prong \textbf{neatly embedded}\footnote{A manifold or $k$-prong $M$ embedded in a manifold $N$ is \emph{neatly embedded}
provided $M$ is a closed subspace of $N$, $\partial M=M \cap \partial N$, and $M$ meets $\partial N$ transversely.} in $B$. A ball-prong pair is unknotted if it is homeomorphic to the unit ball with $k$ radial arcs. Note that a \textbf{ball-arc pair} is the special case $k=2$. The following lemma is useful for identifying unknotted ball-prong pairs. See \cite[Lem. 2.1]{calcut2025} for a proof. 

\begin{lemma}\label{lem:ballpairs}
    Let $(B,P)$ be a ball-prong pair. Then $(B, P)$ is unknotted if and only if there exists a 2-disk $D$ neatly embedded in $B$ and containing $P$.
\end{lemma}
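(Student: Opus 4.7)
The plan is to prove both implications, with the forward direction being essentially a matter of placing the model in standard position, and the reverse direction being the substantive part. For the forward direction, assume $(B,P)\cong (B^3,R)$ where $R$ is a collection of $k$ radial arcs in the unit ball $B^3$ ending at points $p_1,\ldots,p_k\in S^2$. I would first compose with the radial extension of any self-homeomorphism of $S^2$ that moves the $p_i$ onto a great circle; because this extension carries radial arcs to radial arcs, the new image of $R$ lies in the equatorial disk $D_0\subset B^3$. Pulling $D_0$ back through the original homeomorphism yields a neatly embedded 2-disk in $B$ containing $P$.

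For the reverse direction, suppose $D\subset B$ is a neatly embedded 2-disk containing $P$. I would proceed in two steps: first standardize $D$ inside $B$, then standardize $P$ inside $D$. For the first step, the simple closed curve $\partial D$ splits $\partial B$ into two disks $E_1,E_2$, and each of the 2-spheres $D\cup E_i$ bounds a 3-ball in $S^3$ by Alexander's theorem. Thus $D$ cuts $B$ into two 3-balls meeting along $D$, and one can build a homeomorphism of pairs $(B,D)\to (B^3,D_0)$ by matching $(\partial B;E_1,E_2,\partial D)$ with the corresponding hemisphere/equator decomposition of $\partial B^3$, extending across $D\to D_0$, and filling in each half-ball piece by coning. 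Under this homeomorphism, $P$ becomes a $k$-prong in $D_0$ whose central vertex lies in the interior of $D_0$ and whose $k$ endpoints lie on $\partial D_0$.

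It then remains to standardize $P$ inside $D_0$, which I expect to be the main obstacle because the PL straightening of a tree in a disk requires some care. I would argue as follows: a regular neighborhood $N$ of $P$ in $D_0$ is a 2-disk (since $P$ is contractible and $D_0$ is a disk), and the closure of $D_0\setminus N$ is a disjoint union of $k$ disks, each meeting $N$ in a single arc of $\partial N\setminus \partial D_0$. The same description holds for the model prong consisting of $k$ straight radial arcs meeting at the origin of $D_0$. This combinatorial matching of cell decompositions, together with the classical fact that two PL disks in a disk sharing the same boundary configuration are interchanged by a self-homeomorphism of the ambient disk, produces a self-homeomorphism of $D_0$ carrying $P$ to the model prong. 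Extending this self-homeomorphism across the two half-balls cut off by $D_0$ (using that any self-homeomorphism of $S^2$ extends over $B^3$) yields a homeomorphism $(B^3,P)\to (B^3,\text{radial arcs})$, unknotting $(B,P)$ and completing the proof.
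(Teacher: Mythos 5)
Your argument is correct. Note that the paper does not actually prove this lemma---it cites \cite[Lem.~2.1]{calcut2025}---so there is no in-paper proof to compare against; your write-up is the standard argument one would expect there (forward direction: put the radial model's endpoints on a great circle so the prong lies in the equatorial disk; reverse direction: use Alexander's theorem to standardize $(B,D)$ as $(B^3,D_0)$, then standardize the prong inside $D_0$ via regular neighborhoods and extend over the two half-balls by coning). Two small points worth tightening. First, the claim that the closure of $D_0\setminus N$ consists of exactly $k$ disks, each meeting $N$ in one frontier arc and $\partial D_0$ in one arc, deserves a sentence of justification (e.g.\ an Euler characteristic count for the tree $P$ properly embedded in $D_0$, or the observation that a tree cannot separate a disk into a region missing the boundary); it is true, but it is the combinatorial heart of your step 2. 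Second, in the final extension you cannot directly invoke ``any self-homeomorphism of $S^2$ extends over $B^3$'': you must first extend the self-homeomorphism of $D_0$ over each hemisphere $E_i\subset\partial B^3$ by coning its restriction to $\partial D_0$, and only then apply the Alexander trick to each half-ball bounded by $D_0\cup E_i$. Both are routine PL fixes, not gaps in the approach.
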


A \textbf{2-connected sum} of a theta-curve $\theta$ and a knot $K$, denoted $\theta\#_2 K$, is formed analogously to the connected sum of two knots. Assume $\theta$ and $K$ lie in separate copies of $S^3$. Let $(B_1,\alpha_1)$ be an unknotted ball-arc pair where $\alpha_1\subset \text{Int}~e_i$ for some arc $e_i$ of $\theta$. Let $(B_2,\alpha_2)$ be an unknotted ball-arc pair for some arc $\alpha_2\subset K$. First, delete the interiors of $B_1$ and $B_2$ from their respective copy of $S^3$. Then, glue together the resulting balls using an orientation-reversing homeomorphism of their boundary 2-spheres matching up the boundary points of $\alpha_1$ and $\alpha_2$. A result is $\theta\#_2 K$.

A \textbf{3-connected sum} of two theta-curves $\theta_1$ and $\theta_2$, denoted $\theta_1\#_3\theta_2$, is formed as follows. Assume that $\theta_1$ and $\theta_2$ lie in separate copies of $S^3$. Let $(B_1,P_1)$ and $(B_2,P_2)$ be unknotted ball-prong pairs where $P_i\subset \theta_i$ is a 3-prong. First, delete the interiors of $B_1$ and $B_2$ from their respective copy of $S^3$. Then, glue together the resulting ball-prong pairs using an orientation-reversing homeomorphism of their boundary 2-spheres matching up the boundary points of $P_1$ and $P_2$. A result is $\theta_1\#_3 \theta_2$.

Recall that the connected sum of two \emph{unoriented} knots may produce up to two knots---see Budney's~\cite{budney} MathOverflow answer using the $8_{17}$ knot.
Similarly, without additional data, the 2-connected sum may produce up to six theta-curves,
and the 3-connected sum may produce up to 24 theta-curves.
Wolcott~\cite[$\S$4]{wolcott}
proved that the 3-connected sum is well-defined once one specifies both the vertices in $\theta_1$ and $\theta_2$
at which to sum and the matching of their edges.

A theta-curve $\theta$ is \textbf{prime} provided the following three conditions are satisfied: 
\begin{enumerate}
    \item[(i)] $\theta$ is knotted,
    \item[(ii)] $\theta\neq\theta'\#_2 K$ for some possibly trivial theta-curve $\theta'$ and some nontrivial knot $K$, and
    \item[(iii)] $\theta\neq\theta_1\#_3\theta_2$ for some nontrivial theta-curves $\theta_1$ and $\theta_2$.
\end{enumerate}
A theta-curve is \textbf{composite} provided it is not prime and is nontrivial. 

A simple closed curve $C$ in the interior of a surface $\Sigma$ is \textbf{inessential} if it bounds a disk in $\Sigma$, or $C$ union a boundary component of $\Sigma$ bound an annulus in $\Sigma$.
Otherwise, $C$ is \textbf{essential}.
An arc $\alpha$ neatly embedded in $\Sigma$ is \textbf{inessential} if $\alpha$ union an arc in the boundary of $\Sigma$ bound a disk in $\Sigma$, and is \textbf{essential} otherwise. An arc $\alpha$ on a connected surface $\Sigma$ is \textbf{separating} provided $\Sigma-\alpha$ is disconnected.

Let $\Sigma$ be a compact, orientable, connected surface.
The \textbf{genus} of $\Sigma$, denoted $g(\Sigma)$, is defined by:
\begin{equation}\label{eq:gen}
    g(\Sigma)=\frac{2-b(\Sigma)-\chi(\Sigma)}2
\end{equation}
where $b(\Sigma)$ is the number of boundary components of $\Sigma$ and $\chi(\Sigma)$ is the Euler characteristic of $\Sigma$.
The inclusion-exclusion principle for the Euler characteristic is useful:
\[
\chi(A\cup B) = \chi(A) + \chi(B) - \chi(A\cap B)
\]
It implies that genus is unchanged by capping a boundary component of $\Sigma$ with a disk, and by
removing the interior of a $2$-disk from $\Sigma$.
It also implies that genus is additive with respect to connected sum.
Note that the genus of $\Sigma$ in~\eqref{eq:gen} is a nonnegative integer.
To see that, cap the boundary components of $\Sigma$ with disks and then apply the classification of closed, orientable, connected surfaces.

Let $K\subset S^3$ be a knot.
A \textbf{Seifert surface} $\Sigma$ for $K$ is a compact, orientable, connected surface in $S^3$ whose boundary is $K$.  
Seifert's algorithm~\cite{seifert} produces a Seifert surface for each knot.
By the well-ordering principle, $K$ has a minimal genus Seifert surface.
The \textbf{genus} of $K$ is the genus of a minimal genus Seifert surface for $K$.
Note that minimal genus Seifert surfaces need not be unique---see Rolfsen~\cite[p. 123]{rolfsen} and Roberts~\cite{roberts}.

The following basic facts will be useful.
The proof of Lemma~\ref{lem:sep} is straightforward and left to the reader.

\begin{lemma}\label{lem:sep}
    Let $\alpha$ be a separating arc on a compact, orientable, connected surface $\Sigma$. Let $\Sigma_1$ and $\Sigma_2$ be the two components of the compact surface that results from cutting $\Sigma$ along $\alpha$. Then, $g(\Sigma)=g(\Sigma_1)+g(\Sigma_2)$. 
\end{lemma}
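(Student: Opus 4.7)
The plan is to combine the genus formula~\eqref{eq:gen} with careful accounting of how cutting along $\alpha$ affects Euler characteristic and boundary components. Writing $g(\Sigma_i) = (2-b(\Sigma_i)-\chi(\Sigma_i))/2$ and summing over $i=1,2$, one obtains
\[
g(\Sigma_1)+g(\Sigma_2) = \frac{4 - (b(\Sigma_1)+b(\Sigma_2)) - (\chi(\Sigma_1)+\chi(\Sigma_2))}{2}.
\]
The goal is to show that this equals $(2-b(\Sigma)-\chi(\Sigma))/2$, which will follow once I establish the two identities $\chi(\Sigma_1)+\chi(\Sigma_2)=\chi(\Sigma)+1$ and $b(\Sigma_1)+b(\Sigma_2)=b(\Sigma)+1$; the two surplus $+1$'s cancel the drop from $4$ to $2$ in the numerator.

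The Euler characteristic identity is immediate from the inclusion--exclusion formula highlighted in Section~\ref{sec:def}: realizing $\Sigma$ as $\Sigma_1 \cup \Sigma_2$ with $\Sigma_1\cap\Sigma_2 = \alpha$, and noting $\chi(\alpha)=1$, we get
\[
\chi(\Sigma) = \chi(\Sigma_1)+\chi(\Sigma_2)-\chi(\alpha) = \chi(\Sigma_1)+\chi(\Sigma_2) - 1.
\]

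The step requiring the most care is the boundary-component count, and this is where I expect the only real obstacle. The key preliminary observation is that a separating arc on a connected orientable surface must have both of its endpoints on a single boundary component of $\Sigma$. To see this, take a tubular neighborhood $\alpha\times[-\varepsilon,\varepsilon]$ of $\alpha$; if the two endpoints lay on distinct boundary components $\partial_1$ and $\partial_2$ of $\Sigma$, then one could travel from one side $\alpha\times\{-\varepsilon\}$ to the other side $\alpha\times\{\varepsilon\}$ by following $\partial_1$ (or $\partial_2$) around to the opposite side, showing that $\Sigma-\alpha$ is connected and contradicting the separating hypothesis. Hence both endpoints lie on the same boundary component $\partial_0$, which $\alpha$ divides into two subarcs $\beta_1,\beta_2$. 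Upon cutting, the component $\partial_0$ contributes the two circles $\alpha\cup\beta_1\subset\partial\Sigma_1$ and $\alpha\cup\beta_2\subset\partial\Sigma_2$, while the remaining boundary components of $\Sigma$ are distributed unchanged between $\Sigma_1$ and $\Sigma_2$. Thus $b(\Sigma_1)+b(\Sigma_2)=b(\Sigma)+1$.

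Substituting both identities into the displayed expression for $g(\Sigma_1)+g(\Sigma_2)$ yields $(2 - b(\Sigma)-\chi(\Sigma))/2 = g(\Sigma)$, completing the proof.
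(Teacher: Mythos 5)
Your proof is correct; the paper declares this lemma ``straightforward and left to the reader,'' and your argument supplies exactly the intended Euler-characteristic bookkeeping via~\eqref{eq:gen}. In particular, you correctly identify and justify the one nontrivial point---that a separating neatly embedded arc must have both endpoints on a single boundary component, so that $b(\Sigma_1)+b(\Sigma_2)=b(\Sigma)+1$---and the arithmetic then closes.
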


\begin{lemma}\label{lemm:2}
    Let $\alpha$ be an essential simple closed curve in the interior of a compact, orientable, connected surface $\Sigma$. Cut $\Sigma$ along $\alpha$ and cap each of the two resulting boundary circles with a disk to obtain the surface $\Sigma'$. If $\alpha$  is not separating on $\Sigma$, then $g(\Sigma')=g(\Sigma)-1$. If $\alpha$ is separating on $\Sigma$ and $\Sigma$ has at most one boundary component, then each component $\Sigma_i'$ of $\Sigma'$ satisfies $0<g(\Sigma_i')<g(\Sigma)$.
\end{lemma}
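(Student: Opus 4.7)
The plan is to read off both statements from the genus formula~\eqref{eq:gen} by computing $\chi$ and $b$ after cutting and capping, and then to use the essentiality hypothesis only at the end, to rule out borderline cases in the separating situation.

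I would first observe that cutting $\Sigma$ along $\alpha$ preserves the Euler characteristic, since the cut surface $\Sigma_c$ differs from $\Sigma$ only by doubling $\alpha$ and $\chi(\alpha)=0$. Capping each of the two new boundary circles with a disk then increases $\chi$ by one per cap, by inclusion-exclusion across a circle. Hence $\chi(\Sigma')=\chi(\Sigma)+2$. In the non-separating case $\Sigma'$ is connected and $\partial \Sigma'=\partial\Sigma$, so $b(\Sigma')=b(\Sigma)$, and plugging into~\eqref{eq:gen} immediately yields $g(\Sigma')=g(\Sigma)-1$.

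In the separating case write $\Sigma=\Sigma_1\cup_\alpha \Sigma_2$, and let $\Sigma_i'$ be $\Sigma_i$ with its $\alpha$-boundary capped by a disk. A direct computation using $\chi(\Sigma)=\chi(\Sigma_1)+\chi(\Sigma_2)$ together with $b(\Sigma_i')=b(\Sigma_i)-1$ gives $g(\Sigma_1')+g(\Sigma_2')=g(\Sigma)$. Since genera are nonnegative integers, it suffices to show each $g(\Sigma_i')>0$ in order to conclude $0<g(\Sigma_i')<g(\Sigma)$. Under the hypothesis $b(\Sigma)\leq 1$, every $\Sigma_i'$ has at most one boundary component, so $g(\Sigma_i')=0$ would force $\Sigma_i'$ to be a sphere or a disk, and hence $\Sigma_i$ to be a disk bounded by $\alpha$ or an annulus cobounded by $\alpha$ and a component of $\partial\Sigma$. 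Both conclusions directly contradict the essentiality of $\alpha$.

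The only real obstacle is the bookkeeping: matching the two shapes of $\Sigma_i'$ to the two alternatives in the definition of an essential simple closed curve, and recognizing why the restriction $b(\Sigma)\leq 1$ is needed. With more boundary components a separating $\alpha$ could cut off a planar piece with several holes, which has genus $0$ without violating essentiality, and the argument would break. The Euler characteristic calculations themselves are routine.
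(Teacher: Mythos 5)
Your proof is correct and follows essentially the same route as the paper's: both arguments read the non-separating case off the genus formula via Euler characteristic and boundary counts, establish $g(\Sigma_1')+g(\Sigma_2')=g(\Sigma)$ in the separating case, and then use the classification of surfaces together with $b(\Sigma)\leq 1$ to show a genus-zero piece would make $\alpha$ bound a disk or cobound an annulus with $\partial\Sigma$, contradicting essentiality. The only cosmetic differences are that the paper phrases the additivity as $\Sigma=\Sigma_1'\#\Sigma_2'$ and inspects the cut piece (a disk with holes) rather than the capped piece (a sphere or disk).
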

\begin{proof}
    Suppose that $\alpha$ is not separating.
    Cut $\Sigma$ along $\alpha$ to obtain the compact surface $\widehat \Sigma$.
    Note that $b\left(\widehat\Sigma\right)=b(\Sigma)+2$ and $\chi\left(\widehat\Sigma\right)=\chi(\Sigma)$.
    Using~\eqref{eq:gen}, we conclude that $g\pa{\widehat\Sigma}=g(\Sigma)-1$.
    As $g(\Sigma')=g\pa{\widehat\Sigma}$, the result follows.
    
    Next, suppose that $\alpha$ is separating.
    Cut $\Sigma$ along $\alpha$ to obtain the compact surface $\widehat{\Sigma}$
    with two components $\widehat{\Sigma}_1$ and $\widehat{\Sigma}_2$.
    Let $\Sigma'_i$ be $\widehat{\Sigma}_i$ capped with a disk
    along the boundary component from $\alpha$.
    Notice that $\Sigma=\Sigma_1'\#\Sigma_2'$.
    As genus is additive with respect to connected sum, we have that: 
    \begin{equation}\label{ineq:gen}
        0\leq g(\Sigma'_i)\leq g(\Sigma) \quad \tn{for $i=1$ and $2$}
    \end{equation}
    Suppose, by way of contradiction, that $g(\Sigma'_i)=0$. Note that $g(\Sigma'_i)=g\left(\widehat\Sigma_i\right)$ and $\widehat\Sigma_i$ is a compact, orientable, connected surface. By the classification of surfaces, $\widehat\Sigma_i$ is a 2-disk with some number of holes. However, since $b(\Sigma)\leq 1$ by hypothesis, $\widehat\Sigma_i$ must be a 2-disk or an annulus, implying that $\alpha$ is inessential in $\Sigma$, a contradiction. Therefore, the inequalities in (\ref{ineq:gen}) are strict. 
    \end{proof}

\section{Proof of Theorem~\ref{thm:main}}\label{sec:proof}
  
Let $\theta=K\cup \alpha$ for some arc $\alpha$ neatly embedded and essential in $\Sigma$. Let $e_1,e_2$, and $e_3$ denote the edges of $\theta$. Without loss of generality, assume that $e_1\cup$ $e_2=K$ so $e_3=\alpha$.  

First, since $K$ is prime, $\theta$ is knotted.

Suppose, seeking a contradiction, that $\theta=\theta'\#_2K'$ for some possibly trivial theta-curve $\theta'$ and some nontrivial knot $K'$. Then, there exists a splitting sphere $S\subset S^3$ that meets $\theta$ at exactly two points $p_1$ and $p_2$ in the interior of an edge of $\theta$. Without loss of generality, assume that $S$ is transverse to $\Sigma$ and $\theta$. Suppose first that $S$ meets either $e_1$ or $e_2$. Note that $\partial(\Sigma\cap S)=\partial\Sigma\cap S$ by transversality. Thus, any arc in $\Sigma\cap S$ must intersect the boundary of $\Sigma$ at two distinct points. Therefore, there is a single arc $\beta\in \Sigma\cap S$ such that $\beta\cap \theta=\{p_1,p_2\}$. Then, $\Sigma\cap S-\beta$ is a finite disjoint union of simple closed curves. Suppose that $\Sigma\cap S-\beta$ is non-empty. We eliminate all simple closed curves in $\Sigma\cap S$ by isotopies of $S$ in $S^3$ fixing $\beta$. First, choose an arbitrary simple closed curve $C\subset\Sigma\cap S$. Then, $C$ divides $S$ into two disks as shown in Figure~\ref{fig:splitting_sphere};
    \begin{figure}[h]
        \centering
        \includegraphics{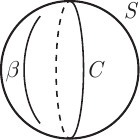}
        \caption{The simple closed curve $C$ bounds a disk in $S$ that does not contain $\beta$.}
        \label{fig:splitting_sphere}
    \end{figure}
    let $D$ be the disk not containing $\beta$. Without loss of generality, assume $C$ is innermost in $D$. Suppose, seeking a contradiction, that $C$ is essential on $\Sigma$. Cut $\Sigma$ along $C$, glue on two parallel copies of $D$, and let $\Sigma'$ denote the resulting surface. If $C$ is not separating on $\Sigma$, then Lemma \ref{lemm:2} implies that $g(\Sigma')<g(\Sigma)$, contradicting minimality of $g(\Sigma)$. If $C$ is separating on $\Sigma$, then one of the components $\Sigma'_i$ of $\Sigma'$ has boundary $K$. Lemma~\ref{lemm:2} implies that $g(\Sigma'_i)<g(\Sigma)$, again contradicting minimality of $g(\Sigma)$. Thus, $C$ is inessential in $\Sigma$ and bounds a disk $D_\Sigma$ in $\Sigma$. So, $D_\Sigma\cup D$ is a sphere and splits $S^3$ into two 3-balls. As $K$ is connected and disjoint from $D_\Sigma\cup D$, $K$ lies in the interior of exactly one of those 3-balls. Use the 3-ball not containing $K$ to push $D$ slightly past $D_\Sigma$ to a parallel copy of $D_\Sigma$, thus removing $C$ from $\Sigma\cap S$. Repeat that step finitely many times to get $\Sigma\cap S=\beta$. Note that $S$ divides $S^3$ into two 3-balls $B_1$ and $B_2$. Any path in $\Sigma$ with one endpoint in $\Int{B_1}$ and the other in $\Int{B_2}$ must meet $S$. Therefore, $\beta$ separates $\Sigma$ into two components $\Sigma\cap B_1$ and $\Sigma\cap B_2$, each of which is a compact, orientable, connected surface with one boundary component. By Lemma \ref{lem:sep}:
    \begin{equation}\label{eq:gencase1}
        g(\Sigma \cap B_1)+g(\Sigma \cap B_2)=g(\Sigma)
    \end{equation}
    Since $K$ is prime, exactly one of $(B_1, B_1\cap K)$ or $(B_2,B_2\cap K)$ is an unknotted ball-arc pair. Without loss of generality, assume $(B_1, B_1\cap K)$ is an unknotted ball-arc pair. 
    Exactly one of the 3-balls $B_1$ or $B_2$ contains $e_3=\alpha$, and the other 3-ball and its intersection with $K$ is a knotted ball-arc pair coming from $K'$ in the supposed 2-connected sum $\theta=\theta'\#_2K'$.
    We will show that $e_3$ must lie in $B_2$, which will be a contradiction since $(B_1, B_1\cap K)$ is an unknotted ball-arc pair.
    As $(B_1, B_1\cap K)$ is an unknotted ball-arc pair,
    $\beta\cup (B_1\cap K)$ bounds a 2-disk $\Delta$ embedded in $B_1$ such that $\Delta$ meets the boundary of $B_1$ in exactly $\beta$. (We note that $\Delta$ is not \emph{neatly} embedded in $B_1$.) If $g(\Sigma \cap B_1)>0$, then by~\eqref{eq:gencase1} we could construct a Seifert surface for $K$ with lower genus than $\Sigma$ by replacing $\Sigma\cap B_1$ with $\Delta$, contradicting minimality of $g(\Sigma)$. Thus, $g(\Sigma \cap B_1)=0$ and  $\Sigma \cap B_1$ is a 2-disk. So, $\Sigma\cap B_1$ contains no neatly embedded essential arcs and $e_3$ must lie in $B_2$, a contradiction.

    Next, suppose that $S$ meets the interior of $e_3$ at $p_1$ and $p_2$.
    As $S$ does not meet $K$, $\Sigma \cap S$ consists of finitely many simple closed curves. Suppose, by way of contradiction, that a component $C$ of $\Sigma\cap S$ is essential in $\Sigma$. Then, $C$ divides $S$ into two 2-disks.
    Let $D$ be one of those disks.
    We show that $D$ contains a component of $\Sigma\cap S$ that is essential in $\Sigma$ and is innermost in $D$.
    If $\Int{D}$ contains no components of $\Sigma\cap S$, then $C$ is the desired component.
    Otherwise, let $C'$ be a component of $\Sigma\cap S$ innermost in $\Int{D}$.
    If $C'$ is inessential in $\Sigma$, then $C'$ bounds a disk $D_S$ in $\Int{D}\subset S$ and a disk $D_\Sigma$ in $\Sigma$.
    Their union is a sphere that splits $S^3$ into two 3-balls. As $K$ is connected and disjoint from $D_S\cup D_{\Sigma}$, $K$ lies in the interior of exactly one of those 3-balls. Use the 3-ball not containing $K$ to push $D_S$ slightly past $D_\Sigma$ to a parallel copy of $D_\Sigma$, removing $C'$ from $\Sigma\cap S$. Repeat that step finitely many times to obtain a component of $\Sigma\cap S$ that is essential in $\Sigma$ and innermost in $D$.
    Without loss of generality, $C$ is a component of $\Sigma\cap S$ that is essential in $\Sigma$ and and bounds a disk $D\subset S$ such that the interior of $D$ does not meet $\Sigma$.
    Now, cut $\Sigma$ along $C$, glue on two copies of $D$, and let $\Sigma'$ denote the resulting surface.
    If $C$ is not separating on $\Sigma$, then Lemma~\ref{lemm:2} implies that $g(\Sigma')<g(\Sigma)$, contradicting minimality of $g(\Sigma)$. If $C$ is separating on $\Sigma$, then one of the two components $\Sigma'_i$ of $\Sigma'$ has boundary $K$. Lemma~\ref{lemm:2} implies that $g(\Sigma'_i)<g(\Sigma)$, again contradicting minimality of $g(\Sigma)$. We conclude that no component of $\Sigma\cap S$ is essential in $\Sigma$. If a component $C$ of $\Sigma\cap S$ is inessential in $\Sigma$ and meets $e_3$, then $C$ meets $\theta$ at two points. 
    Thus, there is exactly one component $C$ of $\Sigma\cap S$---inessential in $\Sigma$---that meets $\theta$ and it does so at exactly $p_1$ and $p_2$. Remove all other components of $\Sigma\cap S$ by isotopies of $S$ as above, so $\Sigma \cap S=C$. Let $B_1$ be the 3-ball bounded by $S$ not containing $K$.
    As $C$ is inessential in $\Sigma$, $C$ bounds a disk $\Delta$ in $\Sigma$.
    By Lemma~\ref{lem:ballpairs}, $(B_1, B_1\cap \theta)$ is an unknotted ball-arc pair. Therefore, $\theta$ is a trivial 2-connected sum, a contradiction.

    Suppose, by way of contradiction, that $\theta=\theta_1\#_3\theta_2$ for two nontrivial theta-curves $\theta_1$ and $\theta_2$. Then, there exists a splitting sphere $S\subset S^3$ that meets $\theta$ at exactly one point in the interior of each edge of $\theta$ and is transverse to $\Sigma$ and $\theta$. Let $p_1$, $p_2$, and $p_3$ be the points in the interiors of the edges $e_1$, $e_2$, and $e_3$ at which $S$ meets $\theta$.
    Note that $\Sigma\cap S$ is a finite disjoint union of arcs and simple closed curves. As above, there is one arc component $\beta$ of $\Sigma \cap S$ that contains $p_1$ and $p_2$.
    We claim that no simple closed curve component of $\Sigma \cap S$ meets $\theta$ at exactly $p_3$.
    To see that, suppose by way of contradiction, that $C$ is a simple closed curve component of $\Sigma \cap S$ and $C$ meets $\theta$ at exactly $p_3$.
    If $C$ is inessential in $\Sigma$, then $C$ meets $e_3$ an even number of times, a contradiction.
    Hence, $C$ must be essential in $\Sigma$.
    That yields a contradiction as above.
    Therefore, $\beta$ contains $p_3$ as well, proving the claim.
    As above, no simple closed curve component of $\Sigma \cap S$ can be essential in $\Sigma$.
    All simple closed curve components of $\Sigma \cap S$ that are inessential in $\Sigma$ can be removed by isotopies of $S$ in $S^3$ fixing $\theta$ as above.
    Thus, we have reduced to the case that $\beta$ is the only component of $\Sigma\cap S$.
    Note that $S$ splits $S^3$ into two 3-balls $B_1$ and $B_2$,
    and $\beta$ separates $\Sigma$ into two surfaces $\Sigma\cap B_1$ and $\Sigma\cap B_2$.
    As $K$ is prime, exactly one of $(B_1, B_1\cap K)$ or $(B_2, B_2\cap K)$ is an unknotted ball-arc pair.
    Without loss of generality, assume $(B_1, B_1\cap K)$ is an unknotted ball-arc pair.
    As $(B_1, B_1\cap K)$ is an unknotted ball-arc pair,
    $\beta\cup (B_1\cap K)$ bounds a 2-disk $\Delta$ embedded in $B_1$ such that $\Delta$ meets the boundary of $B_1$ in exactly $\beta$.
    (Again, note that $\Delta$ is not \emph{neatly} embedded in $B_1$.)
    If $g(\Sigma\cap B_1)>0$, then by~\eqref{eq:gencase1} we could construct a Seifert surface for $K$ with lower genus than $\Sigma$ by replacing $\Sigma\cap B_1$ with $\Delta$, contradicting minimality of $g(\Sigma)$.
    Otherwise, $g(\Sigma\cap B_1)=0$ which means $\Sigma\cap B_1$ is the 2-disk.
    Lemma~\ref{lem:ballpairs} implies that $(B_1, B_1\cap \theta)$ is an unknotted ball-prong pair. Therefore, $\theta$ is not the 3-connected sum of two non-trivial theta-curves, completing the proof. 

\section{Applications and Questions}\label{sec:applications}

We explore the prime theta-curves given by Theorem~\ref{thm:main} and close with a few questions. 
Let $\St$ denote the set of equivalence classes of theta-curves provided by Theorem~\ref{thm:main}.

\begin{coro}
The set $\St$ is countably infinite.
In particular, Theorem~\ref{thm:main} yields a countably infinite collection of pairwise inequivalent prime theta-curves.
\end{coro}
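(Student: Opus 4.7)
The set $\St$ is countable because the collection of PL isotopy classes of embeddings of the theta-graph into $S^3$ is countable. So the plan reduces to producing an infinite family inside $\St$ and then invoking countability.

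For each integer $n \ge 2$, I would let $K_n$ denote the $(2,2n+1)$-torus knot, which is a prime knot of genus $n$. Let $\Sigma_n$ be a minimal genus Seifert surface for $K_n$, and choose $\alpha_n$ to be a neatly embedded, separating, essential arc in $\Sigma_n$ whose two complementary pieces both have positive genus. Such an $\alpha_n$ exists because $g(\Sigma_n) = n \ge 2$: view $\Sigma_n$ as a boundary-connected sum of $n$ once-punctured tori and let $\alpha_n$ realize one of the summand splittings. Theorem~\ref{thm:main} then places $\theta_n := K_n \cup \alpha_n$ in $\St$.

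The next step is to distinguish the $\theta_n$ using constituent knots. Deleting an edge of a theta-curve yields a knot, and the unordered multiset of the three resulting knot types is an isotopy invariant of the (unlabeled) theta-curve. For $\theta_n$, deleting $\alpha_n$ recovers $K_n$, while deleting either of the two arcs of $K_n$ produces a knot $A_n$ or $B_n$ that bounds one of the two components $\Sigma_{n,1}, \Sigma_{n,2}$ of $\Sigma_n$ cut along $\alpha_n$. Hence $g(A_n) \le g(\Sigma_{n,1})$ and $g(B_n) \le g(\Sigma_{n,2})$. By Lemma~\ref{lem:sep}, $g(\Sigma_{n,1}) + g(\Sigma_{n,2}) = n$, and essentiality of $\alpha_n$ forces both $g(\Sigma_{n,i}) \ge 1$ (otherwise a piece would be a disk cobounded by $\alpha_n$ and an arc of $\partial \Sigma_n$, making $\alpha_n$ inessential). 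Therefore $g(A_n), g(B_n) \le n-1 < n = g(K_n)$, so $K_n$ is the unique maximum-genus constituent of $\theta_n$.

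Finally, if $\theta_n \sim \theta_m$ with $2 \le n \ne m$, invariance of the constituent knot multiset would force the unique maximum-genus constituents to match, giving $K_n \cong K_m$ and contradicting the distinctness of the $(2,2n+1)$-torus knots. Thus $\{\theta_n\}_{n \ge 2}$ is an infinite, pairwise inequivalent family in $\St$, and combined with countability the corollary follows. The main subtlety is choosing the arcs: a generic essential arc gives constituent knots $A_n, B_n$ of uncontrolled type, so the argument could fail. Requiring $\alpha_n$ to be separating reduces genus control to the clean additivity of Lemma~\ref{lem:sep}, and this is what makes the pairwise inequivalence argument go through.
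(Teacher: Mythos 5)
Your proof is correct, but it establishes infinitude by a genuinely different route than the paper. The paper's argument is a pigeonhole: every prime knot bounds a positive-genus minimal Seifert surface containing a neatly embedded essential arc, so every prime knot occurs as a constituent knot of some element of $\St$; since each theta-curve has only three constituent knots and there are infinitely many prime knots, $\St$ must be infinite. You instead exhibit an explicit family $\theta_n=K_n\cup\alpha_n$ with $K_n=t(2,2n+1)$ of genus $n$ and $\alpha_n$ a separating essential arc with both complementary pieces of positive genus (such arcs exist once $g>1$, as the paper notes in its discussion of types), and you distinguish the $\theta_n$ by showing $K_n$ is the unique genus-maximizing constituent: the other two constituents bound the pieces of $\Sigma_n$ cut along $\alpha_n$, whose genera sum to $n$ with both at least $1$ by Lemma~\ref{lem:sep}, hence are each at most $n-1$. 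That bookkeeping is sound---since $\Sigma_n$ has a single boundary circle, each piece has one boundary component, so a genus-zero piece would be a disk and $\alpha_n$ would be inessential---and the constituent-knot multiset is indeed an invariant of the unoriented, unlabeled theta-curve. Your route costs more work (you must control the arc and bound the constituent genera) but buys an explicit, certified pairwise-inequivalent sequence, whereas the paper's pigeonhole gives infinitude without naming which classes differ. The countability halves coincide, except that the paper supplies the justification (countably many subcomplexes of subdivisions of a fixed triangulation of $S^3$) that you assert without proof.
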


\begin{proof}
There are infinitely many prime knots.
For instance, the collections of torus knots \cite[pp.~49~\&~101]{burde} and hyperbolic knots~\cite[$\S$7.1]{purcell} are both infinite.
Every knot has a minimal genus Seifert surface.
Every Seifert surface of positive genus contains a neatly embedded essential arc (see Figure~\ref{fig:thm_example}).
Therefore, every prime knot appears as a constituent knot of a representative of an element of $\St$.
Since each theta-curve contains three constituent knots, $\St$ is infinite.

For countability, recall that each compact surface and compact graph piecewise linearly embedded in $S^3$ is a union of finitely many simplices in a subdivision of the standard triangulation of $S^3$ as the boundary of the $4$-simplex.
There are only countably many such subsets.
\end{proof}

The previous corollary used only the existence of one neatly embedded essential arc in each Seifert surface of a prime knot.
In fact, there is a rich collection of such arcs.
Let $\Sigma$ be a compact, orientable, connected surface of positive genus with one boundary component $K$.
The mapping class group $\mcg{\Sigma}$ of $\Sigma$ acts on the set of neatly embedded essential arcs in $\Sigma$.
The resulting arcs union $K$ can produce many different prime theta-curves.
If $g(\Sigma)=1$, then every neatly embedded essential arc in $\Sigma$ is nonseparating by Lemma~\ref{lem:sep}.
If $g(\Sigma)>1$, then $\Sigma$ contains both nonseparating and separating neatly embedded essential arcs.
If such an arc $\alpha$ separates $\Sigma$, then cutting $\Sigma$ along $\alpha$ yields two compact, orientable, connected surfaces whose genera $g_1$ and $g_2$ sum to $g(\Sigma)$ by Lemma~\ref{lem:sep}.
In that case, let the \textbf{type} of $\alpha$ be the set $\cpa{g_1,g_2}$. 
Notice that $\mcg{\Sigma}$ acts transitively on the set of nonseparating neatly embedded essential arcs on $\Sigma$.
To see that, consider two such arcs $\alpha$ and $\alpha'$ in separate copies of $\Sigma$.
Cut each surface along its respective arc.
By the classification of surfaces, the resulting surfaces are homeomorphic.
As $\Sigma$ is orientable, we may carefully glue each of those two surfaces back together to obtain a self-homeomorphism of $\Sigma$ carrying $\alpha$ to $\alpha'$, as desired.
Similarly, $\mcg{\Sigma}$ acts transitively on the set of separating neatly embedded essential arcs on $\Sigma$ of each type.

We present another way to construct neatly embedded essential arcs in a Seifert surface. This method will be used below.

\begin{lemma}\label{lem:knottoessarc}
Let $K\subset S^3$ be a nontrivial knot and let $\Sigma\subset S^3$ be a Seifert surface for $K$.
Let $J\subset\Int{\Sigma}$ be a nontrivial knot that is not boundary parallel---meaning $J$ and $K$ do not bound an annulus in $\Sigma$.
Isotop $J$ on $\Sigma$ in any way to $J'$ so that exactly an arc $e_1$ of $J'$ lies in $K$.
Define the arcs $e_2=K-\Int{e_1}$ and $e_3=J'-\Int{e_1}$.
Then, $e_3$ is neatly embedded and essential in $\Sigma$. Furthermore, the theta-curve $K\cup e_3$ has $K$ and $J'$ as constituent knots.
\end{lemma}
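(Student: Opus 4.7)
The plan is to verify in turn that $e_3$ is neatly embedded in $\Sigma$, that $e_3$ is essential, and that $K$ and $J'$ appear as constituent knots of the theta-curve $K\cup e_3$. The first claim holds by construction: the arc $e_3=J'-\Int{e_1}$ is a closed subset of $\Sigma$ with $e_3\cap\partial\Sigma=\partial e_1=\partial e_3$, and after a small generic perturbation $e_3$ meets $\partial\Sigma$ transversely and only at its endpoints. The third claim is also immediate: the three edges of the theta-curve $K\cup e_3$ are $e_1,e_2,e_3$, and its three constituent knots are the pairs $e_1\cup e_2=K$, $e_1\cup e_3=J'$, and $e_2\cup e_3$. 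The content of the lemma is the essential claim, which I prove by contradiction.

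Suppose $e_3$ is inessential, so that $e_3$ together with some arc $\gamma\subset K$ satisfying $\partial\gamma=\partial e_3=\partial e_1$ bounds a disk in $\Sigma$. The endpoints of $e_1$ separate $K$ into exactly the two arcs $e_1$ and $e_2$, so $\gamma\in\{e_1,e_2\}$, and I would rule out each subcase by invoking one of the two hypotheses on $J$ in turn. If $\gamma=e_1$, then $J'=e_1\cup e_3$ bounds a disk in $\Sigma\subset S^3$, so $J'$ is unknotted; since any isotopy of $J$ in $\Sigma\subset S^3$ is in particular an isotopy in $S^3$, $J'$ and $J$ are equivalent knots in $S^3$, contradicting that $J$ is nontrivial.

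If instead $\gamma=e_2$, let $D\subset\Sigma$ be the disk with $\partial D=e_2\cup e_3$, and let $\nu$ be a regular neighborhood of $D\cup e_1$ in $\Sigma$. The 2-complex $D\cup e_1$ deformation retracts to a circle---contract $D$ to a point, which identifies the two endpoints of $e_1$ and turns $e_1$ into a loop---so $\chi(\nu)=0$. Since $\nu$ is a compact orientable subsurface of $\Sigma$ with nonempty boundary, $\nu$ is an annulus. Because $K=e_1\cup e_2$ lies in $D\cup e_1\subset\nu$ and comprises all of $\partial\Sigma$, one boundary component of $\nu$ is $K$ and the other is a simple closed curve $L\subset\Int{\Sigma}$. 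Both $K$ and $J'=e_1\cup e_3$ lie in $\nu$ and represent a generator of $\pi_1(\nu)\cong\Z$, so $L$ is freely homotopic to $J'$ in $\nu$, hence to $J$ in $\Sigma$ via the given isotopy. Applying the standard fact that freely homotopic essential simple closed curves in an orientable surface cobound an annulus, I obtain an annulus between $L$ and $J$ in $\Sigma$; gluing it to $\nu$ along $L$ produces an annulus between $J$ and $K$ in $\Sigma$, contradicting that $J$ is not boundary parallel. The main obstacle is this annulus construction: verifying that $\nu$ is an embedded annulus in $\Sigma$ with $K$ as one boundary component, and then transferring the conclusion from the pushoff $L$ back to the original $J$.
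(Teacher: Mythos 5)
Your overall structure matches the paper's: the neat embedding and the identification of the constituent knots are immediate, and the essential claim is handled by contradiction in the same two subcases, with the subcase $\gamma=e_1$ argued exactly as in the paper ($J'$ bounds a disk in $\Sigma$, hence is unknotted, hence so is $J$). For the subcase $\gamma=e_2$ the paper simply asserts that $J$ is boundary parallel; you attempt to justify that assertion, and your regular-neighborhood construction is the right idea: $\nu=\nu(D\cup e_1)$ is indeed a compact orientable connected subsurface with $\chi=0$, hence an annulus, with $K$ as one boundary component and a curve $L\subset\Int\Sigma$ freely homotopic to $J'$ as the other.

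The gap is in how you transfer the conclusion from $L$ to $J$. The ``standard fact'' you invoke---that freely homotopic essential simple closed curves in an orientable surface cobound an annulus---is false as stated: two isotopic essential curves that intersect transversely in a nonempty set cannot be the two boundary components of an embedded annulus, and your $J$ has no reason to be disjoint from $L$. Moreover, even after arranging disjointness, gluing a putative annulus $A$ between $J$ and $L$ onto $\nu$ along $L$ need not yield an \emph{embedded} annulus, since $A$ may re-enter the interior of $\nu$. The repair is short and avoids gluing altogether: $J$ is ambient isotopic in $\Sigma$ to $J'$, and $J'$ is ambient isotopic to $L$ inside the annulus $\nu$ (both are essential simple closed curves in $\nu$, or simply push $J'$ off $D\cup e_1$). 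The time-one homeomorphism $h$ of the composite ambient isotopy carries $L$ to $J$ and carries $\partial\Sigma=K$ to itself, so $h(\nu)$ is an embedded annulus in $\Sigma$ with boundary $K\cup J$, contradicting the hypothesis that $J$ is not boundary parallel. With that substitution your argument is complete, and it supplies the justification the paper leaves to the reader.
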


Note that the third consituent knot of $K\cup e_3$ need not be $K\#J'$.

\begin{proof}
Suppose, by way of contradiction, that $e_3$ is inessential in $\Sigma$.
If $e_1\cup e_3$ bounds a disk in $\Sigma$, then $J'$ and, hence $J$, is unknotted, a contradiction.
If $e_2\cup e_3$ bounds a disk in $\Sigma$, then $J$ is boundary parallel, a contradiction. 
\end{proof}

The first author and Nieman~\cite{calcut2025} classified
all \textbf{torus theta-curves}---theta-curves that lie on a standard torus $T$ in $S^3$.
Part of that classification involved proving certain torus theta-curves are prime.
Let $t(p,q)$ be a nontrivial torus knot,
so $p$ and $q$ are relatively prime integers and $\card{p},\card{q}\geq2$.
Let $\theta(p,q)$ be the theta-curve obtained by adding to $t(p,q)$ a simplest
possible arc on $T$ that is essential in the annulus obtained by cutting $T$ along $t(p,q)$.
Theorem~3.1 of~\cite{calcut2025} proves that $\theta(p,q)$ is prime.
Our Theorem~\ref{thm:main} implies that theorem as follows.
For clarity, we exhibit our argument in the specific case $\theta(3,4)$ as in Figure~\ref{fig:torus}.
\begin{figure}[ht!]
\centering
    \begin{subfigure}[b]{0.5\textwidth}
        \centering
        \includegraphics[]{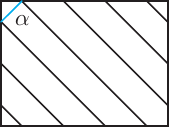}
        \caption{Torus knot $t(3,4)$ and arc $\alpha$ on the torus $T$.}
    \end{subfigure}
 
    \begin{subfigure}[b]{0.3\textwidth}
        \centering
        \includegraphics[]{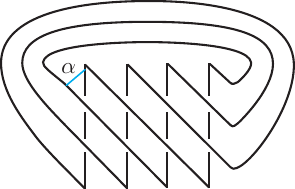}
        \caption{Projection of $t(3,4)$ together with arc $\alpha$.}
    \end{subfigure}
  \hspace{2cm}
    \begin{subfigure}[b]{0.3\textwidth}
        \centering
        \includegraphics[]{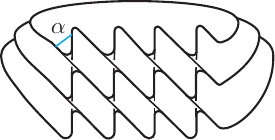}
        \caption{Seifert surface of $t(3,4)$ with arc $\alpha$.}
    \end{subfigure}
   
    \caption{Algorithm to construct a minimal genus Seifert surface for a torus knot.}
    \label{fig:torus}
\end{figure}
The general case follows similarly.
Figure~\ref{fig:torus} shows a standard way to construct a Seifert surface
of $t(p,q)$ of genus $(p-1)(q-1)/2$.
That surface has minimal genus for $t(p,q)$ by~\cite[Cor.~4.12,~\&~Ex.~9.15]{burde}.
As $\alpha$ lies in that minimal genus Seifert surface for $t(p,q)$ and is nonseparating (hence essential),
Theorem~\ref{thm:main} implies that $\theta(p,q)$ is prime, as desired.

Our results provide the following new example.
The figure eight knot $J$ lies in the interior of a minimal genus
Seifert surface $\Sigma$ for $t(3,4)$ as shown in Figure~\ref{fig:figure8}.
\begin{figure}
    \centering
    \includegraphics{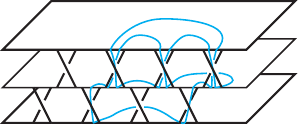}
    \caption{The figure eight knot $J$ in a minimal genus Seifert surface for $t(3,4)$.}
    \label{fig:figure8}
\end{figure}
Note that $J$ is not boundary parallel in $\Sigma$ since the figure eight knot is nontrivial and
amphichiral, whereas all nontrivial torus knots are not amphichiral~\cite[Thm.~3.39]{burde}.
Lemma~\ref{lem:knottoessarc} and Theorem~\ref{thm:main} imply that there is a prime
theta-curve $\theta$ with constituent knots $t(3,4)$ and the figure eight knot.
That means $\theta$ cannot be a torus theta-curve. 

Not every prime theta-curve represents an element of $\St$. For example, Kinoshita's~\cite{kinoshita} theta-curve is prime---see the discussion in~\cite{taylor_ozawa}---yet all its constituent knots are trivial. That raises the question, do there exist prime theta-curves with at least one prime constituent knot that do not represent an element of $\St$? 

Our focus has been on adding arcs to prime knots. The first author and Metcalf-Burton determined which theta-curves are prime when arcs are added to the unknot~\cite{CalcutMB}. How can one add arcs to composite knots to create prime theta-curves?

Let $\Sigma$ be a Seifert surface for a given prime knot $K$. Let $\alpha$ denote any neatly embedded essential arc in $\Sigma$.
Theorem~\ref{thm:main} gives a necessary condition for $\Sigma$ to be of minimal genus for $K$:
$$\Sigma \tn{ minimal genus for } K \quad\implies \quad K\cup \alpha \tn{ is prime for all }\alpha$$
Does the converse hold? 

\bibliography{References.bib}
\bibliographystyle{plainurl}
\end{document}